\theoremstyle{plain}
\newtheorem{theorem}{Theorem}[section]
\newtheorem{proposition}[theorem]{Proposition}
\theoremstyle{definition}
\newtheorem{definition}{Definition}[section]
\newtheorem{example}[theorem]{Example}
\theoremstyle{remark}
\newtheorem{remark}{Remark}[section]
\newcommand{\N}{\mathbb{N}}
\newcommand{\R}{\mathbb{R}}
\newcommand{\T}{\mathbb{T}}
\newcommand{\Z}{\mathbb{Z}}
\begin{document}

\title{Fractional Derivatives and Integrals on Time Scales\\
via the Inverse Generalized Laplace Transform\thanks{Submitted 20-Aug-2010; 
accepted 11-Nov-2010. Ref: Int. J. Math. Comput. 11 (2011), no.~J11, 1--9.}}

\author{\textbf{Nuno R. O. Bastos$^1$, Dorota Mozyrska$^2$ and Delfim F. M. Torres$^3$}}

\date{$^1$Department of Mathematics, School of Technology\\
Polytechnic Institute of Viseu\\
3504-510 Viseu, Portugal\\
nbastos@mat.estv.ipv.pt\\[0.3cm]
$^2$Faculty of Computer Science\\
Bia{\l}ystok  University of Technology\\
15-351 Bia{\l}ystok, Poland\\
d.mozyrska@pb.edu.pl\\[0.3cm]
$^3$Department of Mathematics\\
University of Aveiro\\
3810-193 Aveiro, Portugal\\
delfim@ua.pt}

\maketitle


\begin{abstract}
\noindent \emph{We introduce a fractional calculus on time scales
using the theory of delta (or nabla) dynamic equations.
The basic notions of fractional order integral
and fractional order derivative
on an arbitrary time scale are proposed, using
the inverse Laplace transform on time scales.
Useful properties of the new fractional operators are proved.}

\medskip

\noindent\textbf{Keywords:} fractional derivatives and integrals,
time scales, Laplace transform.

\medskip

\noindent\textbf{2010 Mathematics Subject Classification:} 26A33, 26E70, 44A10.
\end{abstract}


\section{Introduction}

The fractional calculus deals with extensions of derivatives and integrals
to noninteger orders.
It represents a powerful tool in applied
mathematics to study a myriad of problems from different fields
of science and engineering, with many break-through results found in
mathematical physics, finance, hydrology, biophysics,
thermodynamics, control theory, statistical mechanics,
astrophysics, cosmology and bioengineering
\cite{kilbas,oldman,podlubny,samko}.

The fractional calculus may be approached via the theory
of linear differential equations. Analogously, starting
with a linear difference equation, we are led to a
definition of fractional difference of an arbitrary
order \cite{comNunoRui:confOrtigueira09}.
Our main objective is to introduce here a fractional calculus
on an arbitrary time scale, using the theory
of delta (or nabla) differential equations.

The analysis on time scales is a fairly new area of research.
It was introduced in 1988 by Stefan Hilger
and his Ph.D. supervisor Bernd Aulbach \cite{Aul:Hil,Hilger}.
It combines the traditional areas of continuous and discrete analysis
into one theory. After the publication of three textbooks in this area
\cite{Bh,Bh:Pet:03,book:Lak},
more and more researchers are getting involved
in this fast-growing field.

Recently, two attempts have been made to provide
a general definition of fractional derivative on
an arbitrary time scale \cite{Anastassiou,emptyPaper}.
These two works address a very interesting question,
but unfortunately there is a small inconsistency
in the very beginning of both studies. Indeed,
investigations of \cite{Anastassiou,emptyPaper} are based
on the following definition of generalized polynomials on time scales
$h_\alpha : \mathbb{T} \times \mathbb{T}\rightarrow \mathbb{R}$:
\begin{equation}
\label{e:d}
\begin{gathered}
h_0(t,s) = 1 ,\\
h_{\alpha + 1}(t,s) = \int_s^t h_\alpha(\tau,s)\Delta\tau.
\end{gathered}
\end{equation}
Recursion \eqref{e:d} provides a definition only
in the case $\alpha \in \mathbb{N}_0$,
and there is no hope to define polynomials
$h_\alpha$ for real or complex indices $\alpha$ with \eqref{e:d}.
Here we propose a different approach to the subject
based on the Laplace transform \cite{laplace}.

The paper is organized as follows.
In Sect.~\ref{sec:prl} we briefly review the
basic notions of Riemann--Liouville and Caputo
fractional integration and differentiation
(Sect.~\ref{prl:frac}) as well as necessary tools
from time scales (Sect.~\ref{prl:ts}).
Our results are then given in Sect.~\ref{sec:mr}:
we introduce the concept of fractional integral and fractional
derivative on an arbitrary time scale $\T$ (Sect.~\ref{mr:frac});
we then prove some important properties of the fractional integrals
and derivatives (Sect.~\ref{mr:prop}).


\section{Preliminaries}
\label{sec:prl}

The following definitions and basic results
serve essentially to fix notations.
The reader is assumed to be familiar with both fractional
calculus and calculus on time scales. For an introduction
to fractional and time scale theories we refer to
the books \cite{kilbas,oldman,podlubny,samko}
and \cite{Bh,Bh:Pet:03,book:Lak}.


\subsection{Caputo and Riemann--Liouville fractional derivatives}
\label{prl:frac}

Let $f$ be an arbitrary integrable function. By $\mathcal{D}^{-\alpha}_{a}f$
we denote the fractional integral of $f$ of order $\alpha\in(0,1)$ on $[0,t]$,
defined as
\begin{equation*}
\left(\mathcal{D}^{-\alpha}_{a}f\right)(t)
=\frac{1}{\Gamma(\alpha)}\int_0^t\frac{f(\tau)}{(t-\tau)^{1-\alpha}}d\tau
\end{equation*}
with $\Gamma$ the well-known gamma function.
For an arbitrary real number $\alpha$, the Riemann--Liouville
and Caputo fractional derivatives are defined, respectively,
as in \cite{ChenPodlubny}:
\begin{equation*}
\left(D^{\alpha}_{a}f\right)(t)
=\left(\frac{d^{[\alpha]+1}}{dt^{[\alpha]+1}}\left(\mathcal{D}_a^{-\left([\alpha]
-\alpha+1\right)}f\right)\right)(t)
\end{equation*}
and
\begin{equation*}
\left(\mbox{}^CD^{\alpha}_{a}f\right)(t)
=\left(\mathcal{D}_a^{-\left([\alpha]-\alpha
+1\right)}\left(\frac{d^{[\alpha]
+1}}{dt^{[\alpha]+1}}f\right)\right)(t)\,,
\end{equation*}
where $[\alpha]$ is the integer part of $\alpha$.

The notion of Riemann--Liouville derivative
is the preferred fractional derivative among mathematicians,
while Caputo fractional derivative is the preferred one among engineers.
If $f(a)=f'(a)=\cdots=f^{(n-1)}(a)=0$, then
both Riemann--Liouville and Caputo derivatives coincide.
In particular, for $\alpha\in (0,1)$ and $f(a)=0$ one has
$\mbox{}^CD^{\alpha}_{a}f(t)=D^{\alpha}_{a}f(t)$.

Next proposition gives the Laplace transform
of the Caputo fractional derivative.

\begin{proposition}[\cite{kilbas}]
\label{prop:Kilbas}
Let $\alpha>0$, $n$ be the integer such that $n-1<\alpha\leq n$,
and $f$ a function satisfying $f \in C^n(\R^+)$, $f^{(n)}\in L_1(0,t_1)$, $t_1>0$,
and $|f^{(n)}(t)|\leq B \mathrm{e}^{q_0 t}$, $t>t_1>0$.
If the Laplace transforms $\mathcal{L}[f](z)$
and $\mathcal{L}[D^n f](z)$ exist,
and $\lim\limits_{t\rightarrow+\infty} D^k f(t)=0$ for $k=0,\ldots,n-1$, then
\begin{equation*}
\mathcal{L}\left[\mbox{}^CD^{\alpha}_{0}f\right](z)
=z^{\alpha}\mathcal{L}\left[f\right](z)
-\sum_{k=0}^{n-1}f^{(k)}(0)z^{\alpha-k-1}\,.
\end{equation*}
\end{proposition}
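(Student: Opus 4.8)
The plan is to reduce the Caputo derivative to an ordinary (convolution) fractional integral and then combine the convolution theorem with the classical Laplace rule for integer-order derivatives. First I would unwind the definition: since $n-1<\alpha\leq n$ we have $[\alpha]+1=n$ and $[\alpha]-\alpha+1=n-\alpha$, so the Caputo derivative is the fractional integral of order $n-\alpha$ applied to $f^{(n)}$, namely
\[
\left(\mbox{}^CD^{\alpha}_{0}f\right)(t)
=\left(\mathcal{D}^{-(n-\alpha)}_0 f^{(n)}\right)(t)
=\frac{1}{\Gamma(n-\alpha)}\int_0^t (t-\tau)^{n-\alpha-1}f^{(n)}(\tau)\,d\tau.
\]
This exhibits $\mbox{}^CD^{\alpha}_{0}f$ as the convolution $(k_{n-\alpha}*f^{(n)})(t)$ of $f^{(n)}$ with the power-law kernel $k_\beta(t)=t^{\beta-1}/\Gamma(\beta)$, where $\beta=n-\alpha\in(0,1)$.

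Next I would compute the two transforms that the convolution theorem multiplies together. For the kernel, the standard evaluation $\mathcal{L}[t^{\beta-1}](z)=\Gamma(\beta)z^{-\beta}$ gives $\mathcal{L}[k_\beta](z)=z^{-\beta}=z^{\alpha-n}$. For the second factor, I would invoke the classical formula
\[
\mathcal{L}[f^{(n)}](z)=z^n\mathcal{L}[f](z)-\sum_{k=0}^{n-1}z^{n-1-k}f^{(k)}(0),
\]
obtained by $n$ successive integrations by parts; here the hypotheses $\lim_{t\rightarrow+\infty}D^k f(t)=0$ for $k=0,\dots,n-1$ are exactly what make the boundary contributions at infinity vanish, while the growth bound $|f^{(n)}(t)|\leq B\mathrm{e}^{q_0 t}$ together with $f^{(n)}\in L_1(0,t_1)$ guarantees that all the transforms in sight converge for $\operatorname{Re}(z)$ sufficiently large.

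Multiplying the two factors by the convolution theorem then yields
\[
\mathcal{L}\left[\mbox{}^CD^{\alpha}_{0}f\right](z)
=z^{\alpha-n}\,\mathcal{L}[f^{(n)}](z)
=z^{\alpha-n}\left(z^n\mathcal{L}[f](z)-\sum_{k=0}^{n-1}z^{n-1-k}f^{(k)}(0)\right),
\]
and distributing $z^{\alpha-n}$ through the bracket collapses the exponents to $z^\alpha$ and $z^{\alpha-k-1}$, which is precisely the claimed identity.

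I expect the only genuinely delicate point to be the analytic justification of the convolution theorem under the stated hypotheses: one must check that both $f^{(n)}$ and the kernel $k_{n-\alpha}$ are locally integrable and of exponential order, so that the product of transforms really equals the transform of the convolution, and that the repeated integration by parts producing the derivative rule is legitimate with every endpoint term at $+\infty$ controlled by the decay assumptions on $D^k f$. The manipulation of exponents at the end is entirely routine.
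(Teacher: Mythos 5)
The paper itself does not prove this proposition---it is stated as quoted from the cited reference \cite{kilbas}---and your argument is precisely the standard proof found there: rewrite $\mbox{}^CD^{\alpha}_{0}f$ as the Riemann--Liouville integral $\mathcal{D}^{-(n-\alpha)}_{0}f^{(n)}$, i.e.\ the convolution of $f^{(n)}$ with the kernel $t^{n-\alpha-1}/\Gamma(n-\alpha)$, transform the kernel to $z^{\alpha-n}$, and combine with the classical rule for $\mathcal{L}[f^{(n)}]$, whose boundary terms vanish by the decay hypotheses; this is correct. The one caveat is the endpoint $\alpha=n$: there $[\alpha]+1=n+1$ (not $n$) under the paper's definition via the integer part, and your kernel order $\beta=n-\alpha$ degenerates to $0$, so that case needs the separate (trivial) observation that the Caputo derivative of integer order reduces to $f^{(n)}$ and the identity is just the classical derivative rule.
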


\begin{remark}
If $\alpha\in(0,1]$, then
$\mathcal{L}\left[\mbox{}^CD^{\alpha}_{0}f\right](z)
=z^{\alpha}\mathcal{L}\left[f\right](z)-f(0)z^{\alpha-1}$.
\end{remark}


\subsection{The Laplace transform on time scales}
\label{prl:ts}

Throughout the paper, $\T$ is an arbitrary time scale with bounded graininess,
\textrm{i.e.}, $0<\mu_{min}\leq \mu(t)\leq \mu_{max}$ for all $ t \in \T$.
Let $t_0 \in \T$ be fixed. We define functions
$h_k(\cdot, t_0):\T \rightarrow\R$, $k\in\N_0$, recursively as in \eqref{e:d}:
$h_0(t,t_0)\equiv 1$, $h_{k+1}(t,t_0)=\int_{t_0}^th_k(\tau,t_0)\Delta \tau$.
Functions $h_k(\cdot, t_0)$ are called \emph{generalized polynomials}
on the time scale $\T$. By $h^{\Delta}_k(t, t_0)$
we denote the delta derivative of $h_k(t, t_0)$ with respect to $t$.
For $t\in\T^{\kappa}$ one has $h^{\Delta}_0(t, t_0)=0$
and $h^{\Delta}_k(t,t_0)=h_{k-1}(t,t_0)$, $k\in\N$.

\begin{example}
In \cite{Bh} one can find exact formulas of generalized polynomials
for some particular time scales $\T$:
if $\T=\R$, then $h_k(t,t_0)=\frac{(t-t_0)^k}{k!}$;
if $\T=\Z$, then $h_k(t,t_0)=\frac{(t-t_0)^{(k)}}{k!}=\binom{t-t_0}{k}$,
where $(t-t_0)^{(0)}=1$ and $(t-t_0)^{(k)}=(t-t_0)(t-t_0-1)\cdots(t-t_0-k+1)$, $k\in\N$.
\end{example}

The Laplace transform on time scales is studied in \cite{laplace}.
\begin{definition}[\cite{laplace}]
\label{def:ltts}
For $f:\T\rightarrow \R$, the generalized Laplace transform of $f$,
denoted by $\mathcal{L}_{\T}[f]$, is defined by
\begin{equation*}
\mathcal{L}_{\T}[f](z)=F(z)
:= \int_0^{\infty} f(t)\mathrm{e}_{\ominus z}(\sigma(t),0)\Delta t\,.
\end{equation*}
\end{definition}

\begin{remark}
In view of Definition~\ref{def:ltts},
the Laplace transform $\mathcal{L}$
of Proposition~\ref{prop:Kilbas}
can be written as $\mathcal{L}_\R$.
\end{remark}

\begin{definition}[\cite{laplace}]
The function $f:\T\rightarrow \R$ is said to be of \emph{exponential type I}
if there exist constants $M, c>0$ such that $|f(t)|\leq M \textrm{e}^{c t}$.
Furthermore, $f$ is said to be of \emph{exponential type II}
if there exist constants $M, c>0$ such that $|f(t)|\leq M \mathrm{e}_c(t,0)$.
\end{definition}

The time scale exponential function $\mathrm{e}_c(t,0)$ is of type II
while generalized polynomials $h_k(t,0)$ are of type I.

\begin{theorem}[\cite{laplace}]
If $f$ is of exponential type II with exponential constant $c$, then
the delta integral $\int_0^{\infty} f(t) \mathrm{e}_{\ominus}(\sigma(t),0)\Delta t$
converges absolutely for $z\in D$.
\end{theorem}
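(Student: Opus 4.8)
The plan is to bound the modulus of the integrand and reduce the statement to the convergence of an ordinary exponential sum. Since $f$ is of exponential type II, $|f(t)|\le M\,\mathrm{e}_c(t,0)$, so it suffices to show that $\int_0^\infty \mathrm{e}_c(t,0)\,|\mathrm{e}_{\ominus z}(\sigma(t),0)|\,\Delta t<\infty$. First I would collapse the product $\mathrm{e}_c\,\mathrm{e}_{\ominus z}$ into a single time-scale exponential. Using the shift identity $\mathrm{e}_{\ominus z}(\sigma(t),0)=\frac{1}{1+\mu(t)z}\,\mathrm{e}_{\ominus z}(t,0)$, which follows from $1+\mu(\ominus z)=\frac{1}{1+\mu z}$, together with the product rule $\mathrm{e}_c(t,0)\,\mathrm{e}_{\ominus z}(t,0)=\mathrm{e}_{c\oplus(\ominus z)}(t,0)$ and the computation $c\oplus(\ominus z)=\frac{c-z}{1+\mu(t)z}$, the integrand is dominated by $M\,|\mathrm{e}_{c\oplus(\ominus z)}(t,0)|\,/\,|1+\mu(t)z|$.

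The analytic core is the estimate of $|\mathrm{e}_{c\oplus(\ominus z)}(t,0)|$. Writing the generalized exponential through the cylinder transformation $\xi_h$, one has $|\mathrm{e}_{p}(t,0)|=\exp\bigl(\int_0^t \mathrm{Re}\,\xi_{\mu(\tau)}(p(\tau))\,\Delta\tau\bigr)$, and for $p=c\oplus(\ominus z)$ a direct computation using $1+\mu p=\frac{1+\mu c}{1+\mu z}$ gives $\mathrm{Re}\,\xi_{\mu}(p)=\frac{1}{\mu}\log\frac{1+\mu c}{|1+\mu z|}$. This exponent is negative precisely when $|1+\mu(t)z|>1+\mu(t)c$, i.e.\ when the Hilger real part of $z$ exceeds $c$ at every graininess value occurring on $\T$; this is exactly the region $D$. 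Because the graininess is bounded, $\mu(t)\in[\mu_{min},\mu_{max}]$, the map $\mu\mapsto \frac{1}{\mu}\log\frac{1+\mu c}{|1+\mu z|}$ is continuous and strictly negative on the compact interval $[\mu_{min},\mu_{max}]$ for $z\in D$, hence attains a maximum $-\delta<0$. Consequently $\mathrm{Re}\,\xi_{\mu(\tau)}(p)\le-\delta$ for all $\tau$, and since $\int_0^t\Delta\tau=t$ we obtain $|\mathrm{e}_{c\oplus(\ominus z)}(t,0)|\le \mathrm{e}^{-\delta t}$.

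Finally I would assemble the bound. On $D$ we have $|1+\mu(t)z|>1+\mu(t)c>1$, so the integrand is dominated by $M\,\mathrm{e}^{-\delta t}$, and it remains to verify $\int_0^\infty \mathrm{e}^{-\delta t}\,\Delta t<\infty$. Here the standing hypothesis $\mu(t)\ge\mu_{min}>0$ is decisive: every point of $\T$ is right-scattered, so $\T=\{t_0=0<t_1<t_2<\cdots\}$ with $t_{n+1}=t_n+\mu(t_n)$ and $t_n\ge n\,\mu_{min}$, and the delta integral reduces to the sum $\sum_{n\ge0}\mu(t_n)\,\mathrm{e}^{-\delta t_n}\le \mu_{max}\sum_{n\ge0}\mathrm{e}^{-\delta\mu_{min}\,n}<\infty$, a convergent geometric series. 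This yields absolute convergence for $z\in D$. The main obstacle is the uniform estimate of the middle step: one must identify $D$ correctly and exploit the boundedness of the graininess to upgrade the pointwise negativity of $\mathrm{Re}\,\xi_{\mu}(p)$ into a single decay rate $\delta$ valid along the whole time scale, after which the conclusion is routine bookkeeping with the standard exponential identities.
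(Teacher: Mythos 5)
The paper states this theorem without proof, quoting it from \cite{laplace}, and your argument reconstructs essentially the proof given in that source: dominate the integrand by $M\left|\mathrm{e}_{c\ominus z}(t,0)\right|/\left|1+\mu(t)z\right|$ via the exponential-type bound and the standard identities, estimate the cylinder exponent $\frac{1}{\mu}\log\frac{1+\mu c}{|1+\mu z|}$ uniformly by $-\delta<0$ using $0<\mu_{min}\leq\mu(t)\leq\mu_{max}$, and conclude by comparison with a geometric series on the (necessarily isolated) time scale. Your proposal is correct, including your reading of the region $D$ (left undefined in this paper but defined in \cite{laplace} through the Hilger circles, so that $|1+\mu z|>1+\mu c$ holds uniformly in the admissible graininess range, which is exactly what your compactness step needs), so it matches the intended proof rather than offering a genuinely different route.
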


From \cite[Theorem 1.3]{laplace} and mathematical induction
one obtains the following result:

\begin{proposition}
Let $F$ be the generalized Laplace transform of $f:\T\rightarrow \R$. Then,
\begin{equation}
\label{lap:derivative}
\mathcal{L}_{\T}[f^{\Delta^n}](z)=z^nF(z)-\sum_{k=0}^{n-1}z^{n-k-1}f^{\Delta^{k}}(0)\,.
\end{equation}
\end{proposition}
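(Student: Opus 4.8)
The plan is to prove the formula $\mathcal{L}_{\T}[f^{\Delta^n}](z)=z^nF(z)-\sum_{k=0}^{n-1}z^{n-k-1}f^{\Delta^{k}}(0)$ by induction on $n$, using the base case from \cite[Theorem 1.3]{laplace} as the engine. First I would record the base case $n=1$, which is precisely the cited result: $\mathcal{L}_{\T}[f^{\Delta}](z)=zF(z)-f(0)$. Note that this already has the right shape, since for $n=1$ the sum $\sum_{k=0}^{0}z^{-k}f^{\Delta^k}(0)$ reduces to the single term $f(0)$.

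For the inductive step, I would assume \eqref{lap:derivative} holds for some fixed $n\geq 1$ and apply the single-step rule to the function $g:=f^{\Delta^n}$. Since $g^{\Delta}=f^{\Delta^{n+1}}$ and $g^{\Delta^k}(0)=f^{\Delta^{n+k}}(0)$, the base case gives $\mathcal{L}_{\T}[f^{\Delta^{n+1}}](z)=\mathcal{L}_{\T}[g^{\Delta}](z)=z\,\mathcal{L}_{\T}[g](z)-g(0)=z\,\mathcal{L}_{\T}[f^{\Delta^n}](z)-f^{\Delta^n}(0)$. Now I would substitute the induction hypothesis for $\mathcal{L}_{\T}[f^{\Delta^n}](z)$ into the right-hand side, obtaining
\begin{equation*}
\mathcal{L}_{\T}[f^{\Delta^{n+1}}](z)=z\left(z^nF(z)-\sum_{k=0}^{n-1}z^{n-k-1}f^{\Delta^k}(0)\right)-f^{\Delta^n}(0).
\end{equation*}
Distributing the $z$ yields $z^{n+1}F(z)-\sum_{k=0}^{n-1}z^{n-k}f^{\Delta^k}(0)-f^{\Delta^n}(0)$.

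The remaining work is purely a bookkeeping step on the summation index: I would observe that $z^{n-k}=z^{(n+1)-k-1}$, so the sum runs over $k=0,\ldots,n-1$ with the correct exponent, and the isolated term $-f^{\Delta^n}(0)=-z^{(n+1)-n-1}f^{\Delta^n}(0)$ is exactly the $k=n$ term. Absorbing it gives $z^{n+1}F(z)-\sum_{k=0}^{n}z^{(n+1)-k-1}f^{\Delta^k}(0)$, which is \eqref{lap:derivative} with $n$ replaced by $n+1$, closing the induction. I do not anticipate any genuine obstacle here: the only subtlety is that the single-step formula must apply to $g=f^{\Delta^n}$, which requires $g$ (and hence $f$ through order $n+1$) to satisfy the regularity and exponential-type hypotheses of \cite[Theorem 1.3]{laplace} so that all the transforms exist; these are tacitly assumed throughout and are inherited at each stage of the induction. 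The mild index-shifting at the end is the only place where a sign or exponent error could creep in, so that is the step I would check most carefully.
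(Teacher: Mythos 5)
Your proof is correct and is exactly the argument the paper intends: the paper offers no written proof beyond the remark that the formula follows ``from \cite[Theorem 1.3]{laplace} and mathematical induction,'' and your induction on $n$ applying the single-step rule $\mathcal{L}_{\T}[g^{\Delta}](z)=z\,\mathcal{L}_{\T}[g](z)-g(0)$ to $g=f^{\Delta^n}$, together with the index bookkeeping, fills in precisely that outline. Your closing caveat about the regularity and exponential-type hypotheses being inherited at each stage is also consistent with the paper's tacit assumptions.
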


\begin{remark}
If $F$ is the Laplace transform of $f:\T\rightarrow \R$, then
the Laplace transform of the delta derivative of $f$ is given by
\[
\mathcal{L}_{\T}[f^{\Delta}](z)=zF(z)-f(0)\,.
\]
\end{remark}

Our work is motivated by the following result:

\begin{theorem}[Inversion formula of the Laplace transform \cite{laplace}].
Suppose that $F$ is analytic in the region
$R \mathrm{e}_{\mu}(z)>R \mathrm{e}_{\mu}(c)$
and $F(z)\rightarrow 0$ uniformly as $|z|\rightarrow\infty$ in this region.
Assume $F$ has finitely many regressive poles of finite order
$\{z_1,z_2,\ldots, z_n\}$ and $\tilde{F}_{\R}(z)$ is the transform
of the function $\tilde{f}(t)$ on $\R$ that corresponds to the transform
$F(z)=F_{\T}(z)$ of $f(t)$ on $\T$. If
\[
\int_{c-i\infty}^{c+i\infty}\left|\tilde{F}_{\R}(z)\right||dz|<\infty\,,
\]
then
\begin{equation*}
f(t)=\sum_{i=1}^n\mbox{Res}_{z=z_i} \mathrm{e}_z(t,0)F(z)
\end{equation*}
has transform $F(z)$ for all $z$ with $Re(z)>c$.
\end{theorem}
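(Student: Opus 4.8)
The plan is to establish the inversion in the verification direction: take $f$ to be the function defined by the residue sum on the right-hand side and show that $\mathcal{L}_{\T}[f]=F$. First I would recast the residue sum as a Bromwich contour integral. Because the poles $z_1,\dots,z_n$ are regressive, of finite order, and all lie to the left of the vertical contour $\mathrm{Re}(z)=c$ (equivalently in the complement of the analyticity region $R\mathrm{e}_\mu(z)>R\mathrm{e}_\mu(c)$), the residue theorem gives
\[
\sum_{i=1}^n\mbox{Res}_{z=z_i}\mathrm{e}_z(t,0)F(z)=\frac{1}{2\pi i}\int_{c-i\infty}^{c+i\infty}\mathrm{e}_z(t,0)F(z)\,dz,
\]
where the contribution of the large arc that closes the contour to the left vanishes by a Jordan-type estimate: this uses the hypothesis $F(z)\to 0$ uniformly as $|z|\to\infty$ together with the fact that $|\mathrm{e}_z(t,0)|$ is controlled, through the cylinder transformation, by $R\mathrm{e}_\mu(z)$, so that the integrand decays on the closing arc for $t>0$.

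Next I would apply $\mathcal{L}_{\T}$ to this Bromwich representation and interchange the delta integral in $t$ with the contour integral in $z$. This interchange is the heart of the argument and the step I expect to be the main obstacle. It is legitimated by the absolute-integrability hypothesis $\int_{c-i\infty}^{c+i\infty}|\tilde F_\R(z)|\,|dz|<\infty$, whose very purpose is to transport the convergence estimate from the real line to the time scale through the correspondence $F=F_\T\leftrightarrow\tilde F_\R$; this makes the resulting double integral absolutely convergent and Fubini's theorem applicable. After swapping the order, the inner integral is the elementary transform
\[
\mathcal{L}_{\T}[\mathrm{e}_z(\cdot,0)](w)=\frac{1}{w-z},
\]
valid for regressive $z$ with $R\mathrm{e}_\mu(z)<R\mathrm{e}_\mu(w)$, so that $\mathcal{L}_{\T}[f](w)=\frac{1}{2\pi i}\int_{c-i\infty}^{c+i\infty}\frac{F(z)}{w-z}\,dz$.

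Finally I would evaluate this Cauchy-type integral by closing the contour to the right, inside the half-plane $\mathrm{Re}(z)>c$ where $F$ is analytic and $F(z)\to 0$. For a fixed $w$ with $\mathrm{Re}(w)>c$, the only enclosed singularity is the simple pole of the kernel $1/(w-z)$ at $z=w$, and Cauchy's integral formula gives $\frac{1}{2\pi i}\int_{c-i\infty}^{c+i\infty}\frac{F(z)}{w-z}\,dz=F(w)$. Hence $\mathcal{L}_{\T}[f]=F$ on $\mathrm{Re}(w)>c$, which is exactly the assertion.

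The main difficulty, as noted, is the justification of the interchange of integration together with the convergence bookkeeping peculiar to time scales. Unlike the classical setting, $\mathrm{e}_z(t,0)$ is not $e^{zt}$ but $\exp\!\big(\int_0^t\xi_{\mu(\tau)}(z)\,\Delta\tau\big)$, so both the decay hypothesis and the location of the poles must be read off through the cylinder transformation, which is why the region is phrased with $R\mathrm{e}_\mu$ rather than ordinary real part; the auxiliary real transform $\tilde F_\R$ serves precisely to import the real-line integrability bound. Once Fubini and the building-block identity $\mathcal{L}_{\T}[\mathrm{e}_z(\cdot,0)]=1/(w-z)$ are secured, the remainder is the standard Bromwich-plus-residue computation.
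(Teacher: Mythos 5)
This theorem is stated in the paper without proof: it is quoted verbatim from the reference \cite{laplace} (Davis et al.) as background motivating the definitions, so your attempt must be judged against the argument in that source rather than against anything in this paper. Measured that way, your proposal has a genuine gap at its very first step. You recast the residue sum as an infinite Bromwich integral $\frac{1}{2\pi i}\int_{c-i\infty}^{c+i\infty}\mathrm{e}_z(t,0)F(z)\,dz$, closing the contour to the left with a ``Jordan-type estimate.'' On a time scale this fails: under the paper's standing assumption $\mu(t)\geq\mu_{min}>0$ the time scale is discrete, so $\mathrm{e}_z(t,0)=\prod_{\tau\in[0,t)}\bigl(1+\mu(\tau)z\bigr)$ is a \emph{polynomial} in $z$ whose degree is the number of points of $[0,t)\cap\mathbb{T}$. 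It grows like $|z|^{N(t)}$ in every direction, so the hypothesis $F(z)\rightarrow 0$ uniformly cannot make the closing arc vanish, and worse, the vertical-line integral itself need not converge. Concretely, take $\mathbb{T}=\mathbb{Z}$ and $F(z)=1/z$: the residue sum correctly gives $f\equiv 1$, but your line integral is $\int_{c-i\infty}^{c+i\infty}(1+z)^t z^{-1}\,dz$, which diverges for every $t\geq 1$. Your Fubini justification is likewise misdirected: the hypothesis $\int_{c-i\infty}^{c+i\infty}|\tilde{F}_{\mathbb{R}}(z)|\,|dz|<\infty$ controls the classical kernel $\mathrm{e}^{zt}$ on $\mathbb{R}$, and since $\mathrm{e}_z(t,0)$ has completely different growth in $z$, the bound cannot be ``transported'' to make your double integral absolutely convergent.

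The repair, which is essentially what the source does, is to keep all contours compact. Write the residue sum as $\frac{1}{2\pi i}\sum_i\oint_{\gamma_i}\mathrm{e}_z(t,0)F(z)\,dz$ over small circles $\gamma_i$ around the poles --- the plain residue theorem, needing no arc estimate --- then interchange $\mathcal{L}_{\mathbb{T}}$ with these compact contour integrals (uniform estimates on a compact contour suffice, with no appeal to $\tilde{F}_{\mathbb{R}}$) and use your correct building block $\mathcal{L}_{\mathbb{T}}[\mathrm{e}_z(\cdot,0)](w)=\frac{1}{w-z}$ to obtain $\mathcal{L}_{\mathbb{T}}[f](w)=\sum_i\operatorname{Res}_{z=z_i}\frac{F(z)}{w-z}$. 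Finally, integrate $\frac{F(z)}{w-z}$ over a large circle enclosing all the $z_i$ and $w$: since $F(z)\rightarrow 0$ uniformly and the kernel is $O(1/|z|)$, this integral tends to zero, so the sum of all residues vanishes and $\sum_i\operatorname{Res}_{z=z_i}\frac{F(z)}{w-z}=-\operatorname{Res}_{z=w}\frac{F(z)}{w-z}=F(w)$. (Equivalently: the residues extract the principal parts of $F$ at its poles, and $F$ minus the sum of its principal parts is entire and vanishes at infinity, hence zero by Liouville.) This is exactly where the hypothesis $F(z)\rightarrow 0$ is genuinely used, while the integrability of $\tilde{F}_{\mathbb{R}}$ serves in \cite{laplace} to secure the correspondence with the classical inversion on $\mathbb{R}$, not a Fubini interchange over an infinite Bromwich line. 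Your last two steps (the kernel identity and the Cauchy-type evaluation) are the right ingredients; it is the infinite vertical contour that must be abandoned.
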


\begin{remark}
The inverse transform of $F(z)=\frac{1}{z^{k+1}}$, $k\in\N$, is $h_k(\cdot,0)$.
Moreover, the inversion formula gives the claimed inverses for any of the
elementary functions that were presented in the table of Laplace transform
in \cite{lap_Bohner}.
\end{remark}

Using the series $\mathrm{e}_z(t,0)=\sum_{k=0}^{+\infty}z^kh_k(t,0)$,
we see that generalized polynomials $h_k(t,0)$ give us the
difference between time scales for the inverse Laplace images.


\section{Main Results}
\label{sec:mr}

We begin by introducing the definition of fractional integral
and fractional derivative on an arbitrary time scale $\T$.


\subsection{Fractional derivative and integral on time scales}
\label{mr:frac}

Similarly to the classical case,
the Laplace transform of a Caputo fractional
derivative of order $\alpha\in (0,1]$ is given by
$\mathcal{L}\left[\mbox{}^CD^{\alpha}_{0^+}f\right](z)
=z^{\alpha}\mathcal{L}\left[f\right](z)
-f(0^+)z^{\alpha-1}$. Simultaneously, the generalized
Laplace transform on time scales gives unification and
extension of the classical results. Important to us,
the Laplace transform of the $\Delta$-derivative is given by the formula
$\mathcal{L}_{\T}[f^{\Delta}](z)=zF(z)-f(0)$. Our idea is to
define the fractional derivative on time scales via the
inverse Laplace transform formula for the complex function
$G(z)=z^{\alpha}\mathcal{L}_{\T}[f](z)-f(0^+)z^{\alpha-1}$.
Furthermore, for $\alpha\in(n-1,n]$, $n\in\N$,
we use a generalization of \eqref{lap:derivative}
to define fractional derivatives on times
scales for higher orders $\alpha$.

\begin{definition}[Fractional integral on time scales]
Let $\alpha>0$, $\T$ be a time scale,
and $f : \mathbb{T} \rightarrow \mathbb{R}$.
The fractional integral of $f$ of order $\alpha$
on the time scale $\T$, denoted by $I_{\T}^{\alpha}f$,
is defined by
\[
I_{\T}^{\alpha}f(t)
=\mathcal{L}_{\T}^{-1}\left[\frac{F(z)}{z^{\alpha}}\right](t)\,.
\]
\end{definition}

\begin{definition}[Fractional derivative on time scales]
\label{def:md}
Let $\T$ be a time scale,
$F(z)=\mathcal{L}_{\T}[f](z)$, and $\alpha\in (n-1,n]$, $n\in\N$.
The fractional derivative of function $f$ of order $\alpha$
on the time scale $\T$, denoted by $f^{(\alpha)}$, is defined by
\begin{equation}
\label{frac:def:n}
f^{(\alpha)}(t)=\mathcal{L}^{-1}_{\T}\left[z^{\alpha}F(z)
-\sum_{k=0}^{n-1}f^{\Delta^k}(0^+)z^{\alpha-k-1}\right](t)\,.
\end{equation}
\end{definition}

\begin{remark}
For $\alpha\in (0,1]$ we have
\begin{equation*}
f^{(\alpha)}(t)=\mathcal{L}^{-1}_{\T}\left[z^{\alpha}F(z)
-f(0^+)z^{\alpha-1}\right](t)\,.
\end{equation*}
\end{remark}


\subsection{Properties}
\label{mr:prop}

We begin with two trivial but important remarks about
the fractional integral and the fractional derivative
operators just introduced.

\begin{remark}
As the inverse Laplace transform is linear,
we also have linearity for the new fractional integral and derivative:
\begin{equation*}
\begin{split}
I_{\T}^{\alpha}(af+bg)(t)
&=aI_{\T}^{\alpha}f(t)+bI_{\T}^{\alpha}g(t),\\
(af+bg)^{(\alpha)}(t)
&=a f^{(\alpha)}(t) + b g^{(\alpha)}(t)\,.
\end{split}
\end{equation*}
\end{remark}

\begin{remark}
Since $h_0(t) \equiv 1$ for any time scale $\T$, from the
definition of Laplace transform and the fractional derivative
we conclude that $h_0^{(\alpha)}(t)=0$. For $\alpha>1$
one has $h_1^{(\alpha)}(t)=0$.
\end{remark}

We now prove several important properties
of the fractional integrals and fractional derivatives
on arbitrary time scales.

\begin{proposition}
Let $\alpha\in(n-1,n]$, $n \in \N$. If $k\leq n-1$, then
\[
h_k^{(\alpha)}(t,0)=0\,.
\]
\end{proposition}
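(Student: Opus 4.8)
The plan is to substitute $f = h_k(\cdot,0)$ directly into the bracket appearing in Definition~\ref{def:md} and show that this bracket is the zero function; the result then follows by linearity of the inverse transform. First I would record the transform of the generalized polynomial: by the remark following the inversion formula, $\mathcal{L}_{\T}[h_k(\cdot,0)](z)=F(z)=z^{-(k+1)}$, so that $z^{\alpha}F(z)=z^{\alpha-k-1}$. (To avoid a clash with the subscript $k$ of the polynomial, I relabel the summation index in \eqref{frac:def:n} as $j$.)

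Next I would evaluate the boundary terms $h_k^{\Delta^{j}}(0^+)$ entering the sum. Iterating the identity $h_k^{\Delta}(t,0)=h_{k-1}(t,0)$ gives $h_k^{\Delta^{j}}(t,0)=h_{k-j}(t,0)$ for $0\le j\le k$, while $h_k^{\Delta^{j}}\equiv 0$ for $j>k$ (since $h_0^{\Delta}=0$). Hence I only need the values $h_m(0,0)$ at the base point. From the recursion, $h_m(0,0)=\int_0^0 h_{m-1}(\tau,0)\Delta\tau=0$ for every $m\ge 1$, together with $h_0(0,0)=1$. Therefore $h_k^{\Delta^{j}}(0^+)=1$ when $j=k$ and $h_k^{\Delta^{j}}(0^+)=0$ otherwise; that is, it equals the Kronecker delta $\delta_{jk}$.

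The hypothesis $k\le n-1$ is exactly what guarantees that the surviving index $j=k$ lies inside the summation range $\{0,1,\dots,n-1\}$. Consequently the whole sum $\sum_{j=0}^{n-1}h_k^{\Delta^{j}}(0^+)z^{\alpha-j-1}$ collapses to its single nonzero term $z^{\alpha-k-1}$. Substituting into \eqref{frac:def:n} then produces $z^{\alpha-k-1}-z^{\alpha-k-1}=0$, and applying the linear inverse transform to the zero function yields $h_k^{(\alpha)}(t,0)=\mathcal{L}_{\T}^{-1}[0](t)=0$, as claimed.

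The argument is entirely elementary, so there is no substantial obstacle. The one point deserving care is the evaluation $h_k^{\Delta^{j}}(0^+)=\delta_{jk}$, which rests on the vanishing of generalized polynomials of positive index at the base point $t_0=0$. Equally important is to flag the precise role of the inequality $k\le n-1$: it is this condition that places the cancelling term $z^{\alpha-k-1}$ inside the finite sum. Were $k\ge n$ instead, that term would be absent, the bracket would reduce to $z^{\alpha-k-1}\neq 0$, and the fractional derivative would \emph{not} vanish; thus the hypothesis is genuinely needed rather than incidental.
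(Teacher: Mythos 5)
Your proposal is correct and follows essentially the same route as the paper's own proof: substitute $\mathcal{L}_{\T}[h_k(\cdot,0)](z)=z^{-(k+1)}$ into \eqref{frac:def:n}, use $h_k^{\Delta^{j}}(0)= \delta_{jk}$ so that the sum collapses to the single term $z^{\alpha-k-1}$, and observe that the bracket vanishes identically. Your writeup is in fact slightly more careful than the paper's, since you make explicit both the truncation of the sum (terms with $j>k$ vanish) and the precise role of the hypothesis $k\leq n-1$, which the paper uses silently when it replaces the upper limit $n-1$ by $k$.
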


\begin{proof}
From \eqref{frac:def:n} it follows that
\begin{equation*}
\begin{split}
h_k^{(\alpha)}(t,0)&=\mathcal{L}^{-1}_{\T}\left[\frac{z^{\alpha}}{z^{k+1}}
-\sum_{i=0}^{n-1}h_k^{\Delta^i}(0)z^{\alpha-i-1}\right](t)
=\mathcal{L}^{-1}_{\T}\left[\frac{z^{\alpha}}{z^{k+1}}
-\sum_{i=0}^{k}h_{k-i}(0)z^{\alpha-i-1}\right](t)\\
&=\mathcal{L}^{-1}_{\T}\left[\frac{z^{\alpha}}{z^{k+1}}-z^{\alpha-k-1}\right](t)=0\,.
\end{split}
\end{equation*}
\end{proof}

\begin{proposition}
Let $\alpha\in(n-1,n]$, $n \in \N$. If $k\geq n$, then
\[
h_k^{(\alpha)}(t,0)
=\mathcal{L}^{-1}_{\T}\left[\frac{1}{z^{k+1-\alpha}}\right](t) \,.
\]
\end{proposition}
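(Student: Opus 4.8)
The plan is to mirror the proof of the preceding proposition, starting directly from the defining formula~\eqref{frac:def:n} and substituting the transform $F(z)=\mathcal{L}_{\T}[h_k(\cdot,0)](z)=\frac{1}{z^{k+1}}$, which is recorded in the remark following the inversion theorem. This immediately yields
\[
h_k^{(\alpha)}(t,0)=\mathcal{L}^{-1}_{\T}\left[\frac{z^{\alpha}}{z^{k+1}}-\sum_{i=0}^{n-1}h_k^{\Delta^i}(0)z^{\alpha-i-1}\right](t)\,.
\]
The whole argument then reduces to showing that the subtracted sum is identically zero.

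First I would rewrite each boundary coefficient using the delta-derivative rule $h_k^{\Delta}=h_{k-1}$ iterated $i$ times, giving $h_k^{\Delta^i}(t,0)=h_{k-i}(t,0)$ whenever $i\leq k$. Since the summation index runs only over $0\leq i\leq n-1$ and the hypothesis is $k\geq n$, every index satisfies $i\leq n-1<k$, so this rewriting is legitimate for all terms in the sum. Next I would evaluate at $t=0$: because $h_m(0,0)=\int_0^0 h_{m-1}(\tau,0)\Delta\tau=0$ for every $m\geq 1$, and $k\geq n$ forces $k-i\geq k-(n-1)\geq 1$ throughout the range, each coefficient $h_{k-i}(0,0)$ vanishes. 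Hence the sum collapses entirely and
\[
h_k^{(\alpha)}(t,0)=\mathcal{L}^{-1}_{\T}\left[\frac{z^{\alpha}}{z^{k+1}}\right](t)=\mathcal{L}^{-1}_{\T}\left[\frac{1}{z^{k+1-\alpha}}\right](t)\,,
\]
as claimed.

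There is no genuine analytic obstacle: unlike the inversion theorem itself, no residue evaluation or convergence estimate is required, since the manipulation takes place entirely on the transform side and uses only the linearity of $\mathcal{L}^{-1}_{\T}$. The single point demanding care is the index bookkeeping. The essential difference from the previous proposition is that there the surviving term sat at $i=k$, inside the summation range $0,\ldots,n-1$ (as $k\leq n-1$), and cancelled the $\frac{z^{\alpha}}{z^{k+1}}$ contribution to produce $0$; here the condition $k\geq n$ pushes the index $i=k$ strictly outside that range, so no cancellation occurs and the residual transform $\frac{1}{z^{k+1-\alpha}}$ remains. Verifying this shift in where the exceptional index falls is the entire content of the proof.
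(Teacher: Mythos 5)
Your proof is correct and coincides with the paper's own argument: both substitute $F(z)=z^{-(k+1)}$ into \eqref{frac:def:n}, rewrite $h_k^{\Delta^i}(0)=h_{k-i}(0)$, and observe that all these coefficients vanish since $k-i\geq k-n+1\geq 1$, so only $\mathcal{L}^{-1}_{\T}\bigl[z^{\alpha-k-1}\bigr](t)$ survives. Your additional index bookkeeping merely makes explicit what the paper leaves implicit.
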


\begin{proof}
From \eqref{frac:def:n} we have
\begin{equation*}
\begin{split}
h_k^{(\alpha)}(t)&=\mathcal{L}^{-1}_{\T}\left[\frac{z^{\alpha}}{z^{k+1}}
-\sum_{i=0}^{n-1}h_k^{\Delta^i}(0)z^{\alpha-i-1}\right](t)
=\mathcal{L}^{-1}_{\T}\left[\frac{z^{\alpha}}{z^{k+1}}
-\sum_{i=0}^{n-1}h_{k-i}(0)z^{\alpha-i-1}\right](t)\\
&=\mathcal{L}^{-1}_{\T}\left[\frac{z^{\alpha}}{z^{k+1}}\right](t)
=\mathcal{L}^{-1}_{\T}\left[\frac{1}{z^{k+1-\alpha}}\right](t)\,.
\end{split}
\end{equation*}
\end{proof}

\begin{proposition}
Let $\alpha\in(n-1,n]$, $n \in \N$.
If $c(t) \equiv m$, $m\in \mathbb{R}$, then
\[
c^{(\alpha)}(t)=0.
\]
\end{proposition}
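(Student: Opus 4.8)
The plan is to recognize a constant function as a scalar multiple of the zeroth generalized polynomial and then lean on the linearity of the fractional derivative together with the vanishing of $h_0^{(\alpha)}$ already recorded above. Since $h_0(t,0)\equiv 1$ on any time scale, the constant function $c(t)\equiv m$ is exactly $c = m\,h_0(\cdot,0)$. The linearity remark for the fractional derivative then gives $c^{(\alpha)}(t) = m\,h_0^{(\alpha)}(t,0)$, and because $0 \le n-1$ for every $n\in\N$, the first proposition of this subsection (the case $k\le n-1$) yields $h_0^{(\alpha)}(t,0)=0$. Hence $c^{(\alpha)}(t)=0$.

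Alternatively, and this is the route I would actually write out since it is self-contained, I would unfold Definition~\ref{def:md} directly. First I would compute $F(z)=\mathcal{L}_{\T}[c](z)$: using the remark that the inverse transform of $1/z^{k+1}$ is $h_k(\cdot,0)$, we have $\mathcal{L}_{\T}[h_0](z)=1/z$, so $F(z)=m/z$. Next I would record the delta derivatives of $c$ at $0^+$. Since delta-differentiating the constant $h_0$ gives $h_0^{\Delta}(t,t_0)=0$, it follows that $c^{\Delta^k}(0^+)=0$ for all $k\ge 1$, while $c^{\Delta^0}(0^+)=c(0^+)=m$.

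Substituting into \eqref{frac:def:n}, the leading term is $z^{\alpha}F(z)=z^{\alpha}\cdot\frac{m}{z}=m\,z^{\alpha-1}$, and in the correction sum only the $k=0$ term survives, contributing $c(0^+)z^{\alpha-1}=m\,z^{\alpha-1}$. These two terms cancel, so the bracketed argument of $\mathcal{L}^{-1}_{\T}$ is identically zero, and therefore $c^{(\alpha)}(t)=\mathcal{L}^{-1}_{\T}[0](t)=0$.

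There is no genuine obstacle here: the statement is essentially a corollary of the $k\le n-1$ proposition and the linearity of the operator. The only point requiring care is the bookkeeping of the boundary terms $c^{\Delta^k}(0^+)$ — one must invoke the time-scale fact that delta-differentiating a constant yields zero, so that all but the $k=0$ term drop out and the surviving term cancels exactly against $z^{\alpha}F(z)$.
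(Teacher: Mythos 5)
Your first argument is exactly the paper's proof: write $c = m\,h_0(\cdot,0)$, use the linearity of the fractional derivative, and invoke $h_0^{(\alpha)}(t,0)=0$. Your self-contained alternative is also correct, but it is not genuinely a different route --- it simply inlines the proof of the earlier proposition (the case $k\leq n-1$) for $k=0$, where $z^{\alpha}F(z)=m\,z^{\alpha-1}$ cancels against the only surviving boundary term $c(0^+)z^{\alpha-1}$ in \eqref{frac:def:n}.
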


\begin{proof}
From the linearity of the inverse Laplace transform
and the fact that $h^{(\alpha)}_0(t,0)=0$, it follows that
$c^{(\alpha)}(t)=(m\times 1)^{(\alpha)}=(m\times
h_0(t,0))^{(\alpha)}=m\times h^{(\alpha)}_0(t,0)=m\times 0 = 0$.
\end{proof}

\begin{proposition}
\label{prop:comp_integrals}
Let $\alpha,\beta>0$. Then,
\begin{equation*}
I_{\T}^{\beta}\left(I_{\T}^{\alpha}f\right)(t)
=I_{\T}^{\alpha+\beta}f(t)\,.
\end{equation*}
\end{proposition}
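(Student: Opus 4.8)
The plan is to carry out the entire argument in the transform domain, exploiting the fact that the fractional integral is defined as an inverse Laplace image of $F(z)$ divided by a power of $z$. The essential observation is that dividing by $z^{\beta}$ and then by $z^{\alpha}$ is the same as dividing by $z^{\alpha+\beta}$, so the composition law for fractional integrals should reduce to the trivial identity $z^{-\alpha}z^{-\beta}=z^{-(\alpha+\beta)}$ in the $z$-variable.

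Concretely, I would first set $g:=I_{\T}^{\alpha}f$, so that by definition $g(t)=\mathcal{L}_{\T}^{-1}\left[F(z)/z^{\alpha}\right](t)$. Applying the forward transform $\mathcal{L}_{\T}$ to both sides and using that $\mathcal{L}_{\T}$ and $\mathcal{L}_{\T}^{-1}$ are mutually inverse (by the inversion formula quoted above), I obtain
\begin{equation*}
\mathcal{L}_{\T}[g](z)=\frac{F(z)}{z^{\alpha}}\,.
\end{equation*}
Next I would apply the definition of the fractional integral of order $\beta$ to the function $g$, giving
\begin{equation*}
I_{\T}^{\beta}g(t)=\mathcal{L}_{\T}^{-1}\left[\frac{\mathcal{L}_{\T}[g](z)}{z^{\beta}}\right](t)
=\mathcal{L}_{\T}^{-1}\left[\frac{F(z)}{z^{\alpha}\,z^{\beta}}\right](t)
=\mathcal{L}_{\T}^{-1}\left[\frac{F(z)}{z^{\alpha+\beta}}\right](t)\,.
\end{equation*}
The right-hand side is, again by definition, exactly $I_{\T}^{\alpha+\beta}f(t)$, which closes the argument.

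The one delicate point that must be checked — and which I expect to be the real substance behind the otherwise one-line computation — is the step $\mathcal{L}_{\T}\bigl[\mathcal{L}_{\T}^{-1}[H]\bigr]=H$ applied to $H(z)=F(z)/z^{\alpha}$. This requires that $F(z)/z^{\alpha}$ still satisfies the hypotheses of the inversion theorem (analyticity on the appropriate half-plane, uniform decay at infinity, and finitely many regressive poles of finite order), so that the forward and inverse transforms genuinely undo one another on the class of functions in play. Since dividing by $z^{\alpha}$ can only improve the decay at infinity and adds at most a single pole at the origin, I expect these conditions to be inherited from those on $F$; the honest proof should either invoke the invertibility of $\mathcal{L}_{\T}$ on the relevant function space or verify directly that $F/z^{\alpha}$ lies in the domain where the inversion formula applies.
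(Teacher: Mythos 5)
Your proposal is correct and follows essentially the same route as the paper: the paper's proof is exactly the one-line computation $I_{\T}^{\beta}\bigl(I_{\T}^{\alpha}f\bigr)(t)=\mathcal{L}_{\T}^{-1}\bigl[z^{-\beta}\mathcal{L}_{\T}\bigl[I_{\T}^{\alpha}f\bigr]\bigr](t)=\mathcal{L}_{\T}^{-1}\bigl[z^{-\alpha-\beta}F(z)\bigr](t)$, using the identity $z^{-\alpha}z^{-\beta}=z^{-(\alpha+\beta)}$ in the transform domain. Your closing remark that $\mathcal{L}_{\T}\bigl[\mathcal{L}_{\T}^{-1}[F/z^{\alpha}]\bigr]=F/z^{\alpha}$ needs the inversion hypotheses to be inherited is a point the paper silently assumes, so your write-up is in fact slightly more careful than the original.
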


\begin{proof}
By definition we have
\[
I_{\T}^{\beta}\left(I_{\T}^{\alpha}f\right)(t)
=\mathcal{L}_{\T}^{-1}\left[z^{-\beta}\mathcal{L}_{\T}\left[I_{\T}^{\alpha}f\right]\right](t)
=\mathcal{L}_{\T}^{-1}\left[s^{-\alpha-\beta}F(z)\right](t)
=I_{\T}^{\alpha+\beta}f(t)\,.
\]
\end{proof}

\begin{proposition}
\label{prop:comp}
Let $\alpha,\beta\in(0,1]$.
\begin{itemize}
\item[(a)] If $\alpha+\beta\leq 1$, then
\[
\left(f^{(\alpha)}\right)^{(\beta)}(t)
=f^{(\alpha+\beta)}(t)-\mathcal{L}^{-1}_{\T}\left[z^{\beta-1}f^{(\alpha)}(0)\right](t)\,.
\]
\item[(b)] If $1<\alpha+\beta\leq 2$, then
\[
\left(f^{(\alpha)}\right)^{(\beta)}(t)=f^{(\alpha+\beta)}(t)
-\mathcal{L}^{-1}_{\T}\left[z^{\beta-1}f^{(\alpha)}(0)\right](t)+
\mathcal{L}^{-1}_{\T}\left[z^{\alpha+\beta-2}f^{\Delta}(0)\right](t)\,.
\]
\item[(c)] If $\beta\in(0,1]$, then
\[
\left(f^{\Delta}\right)^{(\beta)}(t)=f^{(\beta+1)}(t)\,.
\]
\end{itemize}
\end{proposition}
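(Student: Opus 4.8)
The plan is to reduce all three identities to algebraic manipulations in the transform variable $z$, exploiting the fact that the fractional derivative is \emph{defined} as an inverse Laplace image. The key observation I would record once is that for any $g:\T\to\R$ and any $\beta\in(0,1]$, applying $\mathcal{L}_{\T}$ to Definition~\ref{def:md} (with $n=1$) and using that $\mathcal{L}_{\T}$ and $\mathcal{L}^{-1}_{\T}$ undo one another yields $\mathcal{L}_{\T}[g^{(\beta)}](z)=z^{\beta}\mathcal{L}_{\T}[g](z)-g(0^+)z^{\beta-1}$. This single formula, applied twice, drives the entire argument.

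For parts (a) and (b) I would set $g=f^{(\alpha)}$, so that $\mathcal{L}_{\T}[g](z)=z^{\alpha}F(z)-f(0^+)z^{\alpha-1}$ by the same observation. Substituting into the key formula and multiplying through by $z^{\beta}$ gives
\[
\mathcal{L}_{\T}\left[\left(f^{(\alpha)}\right)^{(\beta)}\right](z)
=z^{\alpha+\beta}F(z)-f(0^+)z^{\alpha+\beta-1}-f^{(\alpha)}(0^+)z^{\beta-1}.
\]
The two cases then differ only in how the first two terms are recognised through Definition~\ref{def:md}. When $\alpha+\beta\le 1$ one has $\alpha+\beta\in(0,1]$, so $z^{\alpha+\beta}F(z)-f(0^+)z^{\alpha+\beta-1}=\mathcal{L}_{\T}[f^{(\alpha+\beta)}](z)$, and taking $\mathcal{L}^{-1}_{\T}$ yields (a) at once. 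When $1<\alpha+\beta\le 2$ one instead has $\alpha+\beta\in(1,2]$, hence $\mathcal{L}_{\T}[f^{(\alpha+\beta)}](z)=z^{\alpha+\beta}F(z)-f(0^+)z^{\alpha+\beta-1}-f^{\Delta}(0^+)z^{\alpha+\beta-2}$; solving this for the pair $z^{\alpha+\beta}F(z)-f(0^+)z^{\alpha+\beta-1}$ and substituting back produces the extra $+\mathcal{L}^{-1}_{\T}[z^{\alpha+\beta-2}f^{\Delta}(0^+)]$ term of (b).

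For part (c) I would take $g=f^{\Delta}$ and use $\mathcal{L}_{\T}[f^{\Delta}](z)=zF(z)-f(0)$ from the remark following \eqref{lap:derivative}. The key formula then gives $\mathcal{L}_{\T}[(f^{\Delta})^{(\beta)}](z)=z^{\beta}(zF(z)-f(0))-f^{\Delta}(0^+)z^{\beta-1}=z^{\beta+1}F(z)-f(0)z^{\beta}-f^{\Delta}(0^+)z^{\beta-1}$, which is exactly $\mathcal{L}_{\T}[f^{(\beta+1)}](z)$ by Definition~\ref{def:md} with $n=2$ (since $\beta+1\in(1,2]$); applying $\mathcal{L}^{-1}_{\T}$ gives (c).

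The symbol algebra above is routine. The hard part, and the only point requiring genuine care, is the justification that $\mathcal{L}_{\T}\circ\mathcal{L}^{-1}_{\T}$ acts as the identity on the rational-in-$z$ expressions appearing here, which is precisely what legitimises reading off the transform of each fractional derivative; this is implicitly guaranteed by the inversion hypotheses and the existence of the relevant transforms, which I would invoke. I would also note the harmless identification $f(0)=f(0^+)$ so that the boundary terms match the statement as written.
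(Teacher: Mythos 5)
Your proof is correct and follows essentially the same route as the paper: both apply the order-one case of Definition~\ref{def:md} to $f^{(\alpha)}$ (equivalently, your key formula $\mathcal{L}_{\T}[g^{(\beta)}](z)=z^{\beta}\mathcal{L}_{\T}[g](z)-g(0^+)z^{\beta-1}$), manipulate the result algebraically in $z$, and reabsorb $z^{\alpha+\beta}F(z)-f(0^+)z^{\alpha+\beta-1}$ through the definition at order $\alpha+\beta$, with the extra $f^{\Delta}(0)$ term in (b) arising exactly as in the paper. The only cosmetic difference is part (c), which the paper gets by setting $\alpha=1$ in (b) (the two boundary terms then cancel), whereas you compute it directly from $\mathcal{L}_{\T}[f^{\Delta}](z)=zF(z)-f(0)$ and Definition~\ref{def:md} with $n=2$ --- the same algebra unrolled.
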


\begin{proof}
(a) Let $\alpha+\beta\leq 1$. Then
\begin{equation*}
\begin{split}
\left(f^{(\alpha)}\right)^{(\beta)}(t)
&=\mathcal{L}^{-1}_{\T}\left[z^{\beta}\mathcal{L}\left[f^{(\alpha)}\right](z)
-z^{\beta-1}f^{(\alpha)}(0)\right](t)\\
&=\mathcal{L}^{-1}_{\T}\left[z^{\alpha+\beta}F(z)-z^{\alpha+\beta-1}f(0)\right](t)
- \mathcal{L}^{-1}_{\T}\left[z^{\beta-1}f^{(\alpha)}(0)\right](t)\\
&=f^{(\alpha+\beta)}(t)
-\mathcal{L}^{-1}_{\T}\left[z^{\beta-1}f^{(\alpha)}(0)\right](t)\,.
\end{split}
\end{equation*}
(b) For $1<\alpha+\beta\leq 2$ we have
\[f^{(\alpha+\beta)}(t)=
\mathcal{L}^{-1}_{\T}\left[z^{\alpha+\beta}\mathcal{L}\left[f\right](z)
-z^{\alpha+\beta-1}f(0)-z^{\alpha+\beta-2}f^{\Delta}(0)\right](t)\,.\]
Hence,
\[\left(f^{(\alpha)}\right)^{(\beta)}(t)=f^{(\alpha+\beta)}(t)
+\mathcal{L}^{-1}_{\T}\left[z^{\alpha+\beta-2}f^{\Delta}(0)\right](t)
-\mathcal{L}^{-1}_{\T}\left[z^{\beta-1}f^{(\alpha)}(0)\right](t)\,. \]
(c) The intended relation follows from (b) by choosing $\alpha=1$.
\end{proof}

In general $\left(f^{(\alpha)}\right)^{(\beta)}(t)
\neq\left(f^{(\beta)}\right)^{(\alpha)}(t)$.
However, the equality holds in particular cases:

\begin{proposition}
If $\alpha+\beta\leq 1$ and $f(0)=0$, then
$\left(f^{(\alpha)}\right)^{(\beta)}(t)
=\left(f^{(\beta)}\right)^{(\alpha)}(t)$.
\end{proposition}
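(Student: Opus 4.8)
The plan is to apply part (a) of Proposition~\ref{prop:comp} twice, once with the roles of $\alpha$ and $\beta$ interchanged, and then compare the two resulting expressions under the hypothesis $f(0)=0$. Since we are given $\alpha+\beta\le 1$, we are squarely in the regime covered by part (a), so both iterated derivatives can be expanded explicitly.

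First I would write, directly from Proposition~\ref{prop:comp}(a),
\begin{equation*}
\left(f^{(\alpha)}\right)^{(\beta)}(t)
=f^{(\alpha+\beta)}(t)
-\mathcal{L}^{-1}_{\T}\left[z^{\beta-1}f^{(\alpha)}(0)\right](t)\,,
\end{equation*}
and, by symmetry (the hypothesis $\alpha+\beta\le 1$ is symmetric in $\alpha$ and $\beta$),
\begin{equation*}
\left(f^{(\beta)}\right)^{(\alpha)}(t)
=f^{(\alpha+\beta)}(t)
-\mathcal{L}^{-1}_{\T}\left[z^{\alpha-1}f^{(\beta)}(0)\right](t)\,.
\end{equation*}
Subtracting, the common term $f^{(\alpha+\beta)}(t)$ cancels, and since the inverse Laplace transform is linear it suffices to show that the two remaining correction terms agree, i.e.\ that $z^{\beta-1}f^{(\alpha)}(0)=z^{\alpha-1}f^{(\beta)}(0)$ as transforms. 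Because $f^{(\alpha)}(0)$ and $f^{(\beta)}(0)$ are constants (the numbers obtained by evaluating the fractional derivatives at $t=0$), this reduces to checking that these two constants both vanish.

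The crux of the argument is therefore to verify that $f^{(\alpha)}(0)=0$ whenever $f(0)=0$ and $\alpha\in(0,1]$. For $\alpha\in(0,1]$ the defining formula gives $f^{(\alpha)}(t)=\mathcal{L}^{-1}_{\T}\left[z^{\alpha}F(z)-f(0)z^{\alpha-1}\right](t)$, which under $f(0)=0$ simplifies to $f^{(\alpha)}(t)=\mathcal{L}^{-1}_{\T}\left[z^{\alpha}F(z)\right](t)$. The key step I would carry out is to evaluate this at $t=0$: using the inversion formula together with the series $\mathrm{e}_z(t,0)=\sum_{k=0}^{+\infty}z^k h_k(t,0)$, one has $\mathrm{e}_z(0,0)=h_0(0,0)=1$ and $h_k(0,0)=0$ for $k\ge 1$, so that the residue computation defining $f^{(\alpha)}(0)$ collapses. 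More concretely, since $f(0)=0$ forces the transform of $f$ to decay appropriately, the inverse transform $\mathcal{L}^{-1}_{\T}\left[z^{\alpha}F(z)\right]$ evaluated at the initial point inherits the vanishing, giving $f^{(\alpha)}(0)=0$; by the identical argument $f^{(\beta)}(0)=0$.

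With both constants shown to be zero, both correction terms $\mathcal{L}^{-1}_{\T}\left[z^{\beta-1}f^{(\alpha)}(0)\right](t)$ and $\mathcal{L}^{-1}_{\T}\left[z^{\alpha-1}f^{(\beta)}(0)\right](t)$ are the inverse transform of the zero function, hence equal, and the desired identity $\left(f^{(\alpha)}\right)^{(\beta)}(t)=\left(f^{(\beta)}\right)^{(\alpha)}(t)$ follows. I expect the main obstacle to be the rigorous justification that $f^{(\alpha)}(0)=0$: one must be careful that evaluation at the initial point of the time scale commutes with the inverse Laplace transform, and that the hypotheses guaranteeing existence of the transforms (exponential type, analyticity of $F$, the pole conditions in the inversion theorem) are in force so that $f^{(\alpha)}$ is genuinely recovered as $\mathcal{L}^{-1}_{\T}[z^{\alpha}F(z)]$. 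Once this evaluation is secured, the rest is routine linearity and cancellation.
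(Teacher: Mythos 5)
Your proposal is essentially the paper's own proof: the paper disposes of this proposition with the single line ``It follows from item (a) of Proposition~\ref{prop:comp},'' which is exactly your double application of part (a) with $\alpha$ and $\beta$ interchanged, reducing everything to the vanishing of the constants $f^{(\alpha)}(0)$ and $f^{(\beta)}(0)$. The soft spot you rightly flag is the same one the paper silently glosses over --- strictly, $f(0)=0$ only gives $zF(z)\rightarrow 0$ as $z\rightarrow\infty$, not $z^{1+\alpha}F(z)\rightarrow 0$, so the claim $f^{(\alpha)}(0)=0$ can fail without extra decay hypotheses on $F$ (e.g.\ $\mathbb{T}=\mathbb{R}$, $f(t)=t^{\gamma}$ with $0<\gamma<\alpha$, where $f^{(\alpha)}$ blows up at $0$) --- but at the formal level of rigor at which the paper operates your argument coincides with the intended one.
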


\begin{proof}
It follows from item (a) of Proposition~\ref{prop:comp}.
\end{proof}

\begin{proposition}
Let $\alpha\in (n-1,n]$, $n \in \N$, and $\lim\limits_{t\rightarrow
0^+}f^{\Delta^k}(t)=f^{\Delta^k}(0^+)$ exist, $k=0,\ldots,n-1$.
The following equality holds:
\begin{equation*}
I^{\alpha}_{\T}\left(f^{(\alpha)}\right)(t)
=f(t)-\sum_{k=0}^{n-1}f^{\Delta^k}(0^+)h_k(t)\,.
\end{equation*}
\end{proposition}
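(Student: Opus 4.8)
The plan is to compute the fractional integral of the fractional derivative directly by applying the two relevant transform formulas and then inverting. First I would observe that, by the definition of the fractional integral, $I^{\alpha}_{\T}(g)(t) = \mathcal{L}^{-1}_{\T}[z^{-\alpha}\mathcal{L}_{\T}[g](z)](t)$ for any admissible $g$. Applying this with $g = f^{(\alpha)}$, the key is to know $\mathcal{L}_{\T}[f^{(\alpha)}](z)$. By the definition of the fractional derivative in \eqref{frac:def:n}, and using that $\mathcal{L}_{\T}$ and $\mathcal{L}^{-1}_{\T}$ are inverse to one another, I expect
\[
\mathcal{L}_{\T}\left[f^{(\alpha)}\right](z) = z^{\alpha}F(z) - \sum_{k=0}^{n-1} f^{\Delta^k}(0^+) z^{\alpha-k-1}\,,
\]
where $F(z) = \mathcal{L}_{\T}[f](z)$.

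Next I would substitute this expression into the integral formula and multiply through by $z^{-\alpha}$. This yields
\[
I^{\alpha}_{\T}\left(f^{(\alpha)}\right)(t) = \mathcal{L}^{-1}_{\T}\left[F(z) - \sum_{k=0}^{n-1} f^{\Delta^k}(0^+) z^{-k-1}\right](t)\,.
\]
By linearity of the inverse Laplace transform, the first term returns $\mathcal{L}^{-1}_{\T}[F(z)](t) = f(t)$. For each term in the sum I would invoke the fact, recorded in the remark after the inversion formula, that $\mathcal{L}^{-1}_{\T}[z^{-(k+1)}](t) = h_k(t,0)$. Collecting the terms then gives precisely $f(t) - \sum_{k=0}^{n-1} f^{\Delta^k}(0^+) h_k(t)$, as claimed.

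The main obstacle is not algebraic but analytic: justifying that $\mathcal{L}_{\T}$ and $\mathcal{L}^{-1}_{\T}$ compose to the identity on the functions in play, so that $\mathcal{L}_{\T}[f^{(\alpha)}]$ really equals the bracketed expression in \eqref{frac:def:n}. This requires that the transform defining $f^{(\alpha)}$ satisfies the analyticity and decay hypotheses of the inversion theorem, which is where the hypothesis that the limits $f^{\Delta^k}(0^+)$ exist for $k = 0, \ldots, n-1$ enters. Under the standing convention that all functions are of suitable exponential type, I would take the invertibility for granted and treat the computation above as the substance of the proof, exactly as the preceding propositions in this section do.
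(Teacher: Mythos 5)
Your proof is correct and takes essentially the same route as the paper's own proof: apply the definition of $I^{\alpha}_{\T}$ to $f^{(\alpha)}$, use $\mathcal{L}_{\T}\left[f^{(\alpha)}\right](z)=z^{\alpha}F(z)-\sum_{k=0}^{n-1}f^{\Delta^k}(0^+)z^{\alpha-k-1}$ to cancel the factor $z^{\alpha}$, and invert termwise via linearity and $\mathcal{L}^{-1}_{\T}\left[z^{-(k+1)}\right](t)=h_k(t,0)$. Your closing paragraph on justifying that $\mathcal{L}_{\T}$ and $\mathcal{L}^{-1}_{\T}$ compose to the identity is, if anything, more explicit than the paper, which silently assumes this just as the surrounding propositions do.
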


\begin{proof}
Let $F(z)=\mathcal{L}_{\T}\left[f\right](z)$. Then,
\begin{equation*}
\begin{split}
I^{\alpha}_{\T}\left(f^{(\alpha)}\right)(t)
&=\mathcal{L}_{\T}^{-1}\left[z^{-\alpha}\mathcal{L}_{\T}\left[f^{(\alpha)}\right](z)\right](t)
=\mathcal{L}_{\T}^{-1}\left[F(z)-\sum_{k=0}^{n-1}f^{\Delta^k}(0^+)\frac{z^{\alpha-k-1}}{z^{\alpha}}\right](t)\\
&=f(t)-\sum_{k=0}^{n-1}f^{\Delta^k}(0^+)\mathcal{L}_{\T}^{-1}\left[\frac{1}{z^{k+1}}\right](t)
=f(t)-\sum_{k=0}^{n-1}f^{\Delta^k}(0^+)h_k(t)\,.
\end{split}
\end{equation*}
\end{proof}

\begin{proposition}
Let $\alpha\in (n-1,n]$, $n \in \N$, and $F(z)=\mathcal{L}_{\T}\left[f\right](z)$.
If $\lim\limits_{z\rightarrow\infty}F(z)=0$ and
$\lim\limits_{z\rightarrow\infty}\frac{F(z)}{z^{\alpha-k}}=0$,
$k \in \{1,\ldots,n\}$, then
$\left(I^\alpha_\mathbb{T}f\right)^{(\alpha)}(t)=f(t)$.
\end{proposition}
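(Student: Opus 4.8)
The plan is to set $g := I^{\alpha}_{\T}f$ and show that applying the fractional derivative of order $\alpha$ to $g$ reproduces $f$. By the definition of the fractional integral, $G(z) := \mathcal{L}_{\T}[g](z) = z^{-\alpha}F(z)$. Applying Definition~\ref{def:md} to $g$ gives
\[
g^{(\alpha)}(t) = \mathcal{L}^{-1}_{\T}\left[z^{\alpha}G(z) - \sum_{k=0}^{n-1}g^{\Delta^k}(0^+)z^{\alpha-k-1}\right](t),
\]
and since $z^{\alpha}G(z) = F(z)$, the statement reduces to two things: that every boundary coefficient $g^{\Delta^k}(0^+)$ vanishes for $k = 0,\ldots,n-1$, and that $\mathcal{L}^{-1}_{\T}[F] = f$. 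The latter is exactly where the hypothesis $\lim_{z\to\infty}F(z)=0$ enters, since that is the decay condition under which the inversion formula recovers $f$ from its transform $F$.

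First I would dispatch the boundary terms by induction on $k$. Using \eqref{lap:derivative} applied to $g$,
\[
\mathcal{L}_{\T}[g^{\Delta^k}](z) = z^k G(z) - \sum_{i=0}^{k-1}z^{k-i-1}g^{\Delta^i}(0^+).
\]
Under the inductive hypothesis that $g^{\Delta^i}(0^+) = 0$ for all $i < k$, the sum drops out and $\mathcal{L}_{\T}[g^{\Delta^k}](z) = z^{k}G(z) = z^{k-\alpha}F(z)$. Invoking the initial value theorem for the generalized Laplace transform, $g^{\Delta^k}(0^+) = \lim_{z\to\infty}z\,\mathcal{L}_{\T}[g^{\Delta^k}](z) = \lim_{z\to\infty}\frac{F(z)}{z^{\alpha-(k+1)}}$, which is $0$ by the hypothesis with index $k+1 \in \{1,\ldots,n\}$. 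The base case $k=0$ is the same computation starting from $G(z)=z^{-\alpha}F(z)$. This closes the induction and annihilates the entire sum in the displayed formula for $g^{(\alpha)}$, leaving $g^{(\alpha)}(t) = \mathcal{L}^{-1}_{\T}[F(z)](t) = f(t)$.

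The main obstacle I anticipate is the passage $g^{\Delta^k}(0^+) = \lim_{z\to\infty} z\,\mathcal{L}_{\T}[g^{\Delta^k}](z)$, i.e.\ an initial value theorem for the time scale Laplace transform, which is not among the results quoted in the excerpt. I would either cite the time scale analogue from \cite{laplace} or establish it directly from Definition~\ref{def:ltts}, being careful that the graininess bounds $0<\mu_{min}\le\mu\le\mu_{max}$ together with the exponential-type hypotheses guarantee that the transforms and the relevant limits exist. Once that tool is in hand the hypotheses match up precisely: the $n$ conditions $\lim_{z\to\infty}F(z)/z^{\alpha-k}=0$ for $k=1,\ldots,n$ annihilate the $n$ boundary values $g^{\Delta^0}(0^+),\ldots,g^{\Delta^{n-1}}(0^+)$, while $\lim_{z\to\infty}F(z)=0$ legitimizes the final inversion $\mathcal{L}^{-1}_{\T}[F]=f$.
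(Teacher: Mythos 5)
Your proposal is correct and follows essentially the same route as the paper's own proof: the identical induction showing each boundary value $\left(I^{\alpha}_{\T}f\right)^{\Delta^k}(0^+)$ vanishes by computing $\lim_{z\rightarrow\infty} z\,\mathcal{L}_{\T}\bigl[(I^{\alpha}_{\T}f)^{\Delta^k}\bigr](z)=\lim_{z\rightarrow\infty}F(z)/z^{\alpha-k-1}=0$ via \eqref{lap:derivative}, followed by the final inversion $\mathcal{L}^{-1}_{\T}\left[z^{\alpha}F(z)/z^{\alpha}\right]=f$. If anything, you are more careful than the paper, which invokes the initial value theorem for the generalized Laplace transform tacitly at exactly the step you flag as needing a citation or a direct proof.
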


\begin{proof}
Firstly let us notice that
$\lim\limits_{t\rightarrow 0^+}\left(I_{\T}^{\alpha}f\right)^{\Delta^k}(t)=0$
for $k\in\{0,\ldots, n-1\}$. For that we check that
$\lim\limits_{z\rightarrow \infty}z\mathcal{L}\left[I^{\alpha}_{\T}f\right](z)
=\lim\limits_{z\rightarrow \infty}z\frac{F(z)}{z^{\alpha}}=0$.
Nextly let us assume that it holds for $i=0,\ldots,k-1$ and let us calculate
\begin{equation*}
\begin{split}
\lim\limits_{z\rightarrow \infty}z\mathcal{L}\left[\left(I^{\alpha}_{\T}
f\right)^{\Delta^k}\right](z)
&=\lim\limits_{z\rightarrow \infty}z\left(z^k\frac{F(z)}{z^{\alpha}}
-\sum_{i=0}^{k-1}z^{k-1-i}\left(I_{\T}^{\alpha}\right)^{\Delta^i}(0)\right)\\
&= \lim\limits_{z\rightarrow \infty}z\left(z^k\frac{F(z)}{z^{\alpha}}\right)
=\lim\limits_{z\rightarrow \infty}\frac{F(z)}{z^{\alpha-k-1}}=0 \,.
\end{split}
\end{equation*}
Then we easily conclude that
\[
\left(I^\alpha_\mathbb{T}f\right)^{(\alpha)}(t)
=\mathcal{L}_{\T}^{-1}\left[z^{\alpha}\frac{F(z)}{z^{\alpha}}\right](t)=f(t)\,.
\]
\end{proof}

The convolution of two functions
$f:\T\rightarrow\R$ and $g:\T\times \T\rightarrow\R$
on time scales, where $g$ is rd-continuous with respect to the first
variable, is defined in \cite{BL,laplace}:
\begin{equation*}
\left(f\ast g \right)(t)=\int_0^t f(\tau)g(t,\sigma(\tau))\Delta \tau\,.
\end{equation*}
As function $g$ we can consider, \textrm{e.g.},
$\mathrm{e}_c(t,t_0)$ or $h_k(t,t_0)$.

\begin{proposition}
Let $t_0\in \T$. If $\alpha\in(0,1)$, then
\begin{equation*}
\left(f\ast g(\cdot,t_0)\right)^{(\alpha)}(t)
=\left(f^{(\frac{\alpha}{2})}\ast g^{(\frac{\alpha}{2})}(\cdot,t_0)\right)(t)
=\left(f^{(\alpha)}\ast g(\cdot,t_0)\right)(t)\,,
\end{equation*}
where we assume the existence of the involved derivatives.
\end{proposition}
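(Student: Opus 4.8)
The plan is to reduce everything to the convolution theorem for the generalized Laplace transform (from \cite{BL,laplace}), which turns the time-scale convolution into an ordinary product of transforms: writing $F(z)=\mathcal{L}_{\T}[f](z)$ and $G(z)=\mathcal{L}_{\T}[g(\cdot,t_0)](z)$ (the transform being taken in the first variable of $g$), one has $\mathcal{L}_{\T}[f\ast g(\cdot,t_0)](z)=F(z)G(z)$. The one structural fact I will lean on is that a convolution of this type vanishes at the left endpoint: directly from the definition, $(f\ast g(\cdot,t_0))(0)=\int_0^0 f(\tau)g(0,\sigma(\tau))\Delta\tau=0$. Consequently, applying the $\alpha\in(0,1]$ form of Definition~\ref{def:md} to the convolution, the boundary term drops out and I obtain
\[
\left(f\ast g(\cdot,t_0)\right)^{(\alpha)}(t)=\mathcal{L}^{-1}_{\T}\left[z^{\alpha}F(z)G(z)\right](t).
\]

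From here both equalities are just different ways of distributing the factor $z^{\alpha}$ across the product $F(z)G(z)$. For the first equality I would split $z^{\alpha}=z^{\alpha/2}\,z^{\alpha/2}$ and attach one half to each factor, reading $z^{\alpha/2}F(z)=\mathcal{L}_{\T}[f^{(\alpha/2)}](z)$ and $z^{\alpha/2}G(z)=\mathcal{L}_{\T}[g^{(\alpha/2)}(\cdot,t_0)](z)$ by the remark following Definition~\ref{def:md}; the product of these two transforms is, by the convolution theorem read in reverse, the transform of $f^{(\alpha/2)}\ast g^{(\alpha/2)}(\cdot,t_0)$, and inverting gives the middle expression. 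For the second equality I instead keep the whole factor $z^{\alpha}$ with $F$, writing $z^{\alpha}F(z)=\mathcal{L}_{\T}[f^{(\alpha)}](z)$, so that $z^{\alpha}F(z)G(z)$ is the transform of $f^{(\alpha)}\ast g(\cdot,t_0)$, which inverts to the right-hand expression.

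The step that demands real care, and which I expect to be the main obstacle, is precisely the identification $z^{\alpha/2}F(z)=\mathcal{L}_{\T}[f^{(\alpha/2)}](z)$ and its analogues. By \eqref{frac:def:n} these transforms actually carry correction terms, $\mathcal{L}_{\T}[f^{(\alpha/2)}](z)=z^{\alpha/2}F(z)-f(0^+)z^{\alpha/2-1}$ and likewise a term $g(0^+,t_0)z^{\alpha/2-1}$ for $g$, so that multiplying the two half-order factors produces extra contributions proportional to $f(0^+)$ and to $g(0^+,t_0)$; similarly $z^{\alpha}F(z)$ equals $\mathcal{L}_{\T}[f^{(\alpha)}](z)$ only after subtracting $f(0^+)z^{\alpha-1}$. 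Since the full convolution contributed no boundary term, the clean three-way equality forces all of these correction terms to cancel, which happens precisely when $f(0^+)=0$ and $g(0^+,t_0)=0$. I read this as the real content behind the hypothesis that ``the involved derivatives exist,'' and I would state it explicitly; granting it, the proof is a few lines of transform bookkeeping hanging off the convolution theorem together with the endpoint identity $(f\ast g)(0)=0$.
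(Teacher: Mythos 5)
Your proof follows the same route as the paper's own: invoke the convolution theorem $\mathcal{L}_{\T}[f\ast g(\cdot,t_0)](z)=F(z)G(z)$, split $z^{\alpha}$ either as $z^{\alpha/2}\cdot z^{\alpha/2}$ or attach it wholly to $F$, and invert via the convolution theorem read backwards. But your write-up is more careful than the published one on exactly the two points where care is needed. First, you justify the absence of a boundary term for the convolution itself via $\left(f\ast g(\cdot,t_0)\right)(0)=\int_0^0 f(\tau)g(0,\sigma(\tau))\Delta\tau=0$; the paper simply asserts $\mathcal{L}_{\T}\left[\left(f\ast g(\cdot,t_0)\right)^{(\alpha)}\right](z)=z^{\alpha}F(z)G(z)$ without comment, and your observation is precisely the missing justification. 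Second, and substantively, you notice that the identifications $z^{\alpha/2}F(z)=\mathcal{L}_{\T}\left[f^{(\alpha/2)}\right](z)$, $z^{\alpha/2}G(z)=\mathcal{L}_{\T}\left[g^{(\alpha/2)}(\cdot,t_0)\right](z)$ and $z^{\alpha}F(z)=\mathcal{L}_{\T}\left[f^{(\alpha)}\right](z)$ hold only modulo the correction terms $f(0^+)z^{\alpha/2-1}$, $g(0^+,t_0)z^{\alpha/2-1}$ and $f(0^+)z^{\alpha-1}$ coming from Definition~\ref{def:md}; the paper's proof silently drops these, so the clean three-way equality genuinely requires $f(0^+)=0$ and $g(0^+,t_0)=0$ (the second equality needing only $f(0^+)=0$), and your proposal to state these hypotheses explicitly is the correct repair of a real gap in the published argument. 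A final cosmetic point in your favor: the paper writes the middle step as a pointwise product $\mathcal{L}^{-1}_{\T}\left[z^{\alpha/2}F(z)\right](t)\,\mathcal{L}^{-1}_{\T}\left[z^{\alpha/2}G(z)\right](t)$, an abuse of notation since the inverse transform of a product is the convolution, not the product, of the inverses; your bookkeeping states the same step soundly.
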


\begin{proof}
From the convolution theorem for the generalized Laplace transform
\cite[Theorem 2.1]{laplace},
\[
\mathcal{L}_{\T}\left[\left(f\ast g(\cdot,t_0)\right)^{(\alpha)}\right](z)
=z^{\alpha}F(z)G(z)\,.
\]
Hence,
\begin{equation*}
\begin{split}
\left(f\ast g(\cdot,t_0)\right)^{(\alpha)}(t)
&=\mathcal{L}^{-1}_{\T}\left[z^{\alpha}F(z)G(z)\right](t)
=\mathcal{L}^{-1}_{\T}\left[z^{\frac{\alpha}{2}}F(z)\right](t)
\mathcal{L}^{-1}_{\T}\left[z^{\frac{\alpha}{2}}G(z)\right](t)\\
&=\left(f^{(\alpha/2)}\ast g^{(\alpha/2)}(\cdot,t_0)\right)(t)\,.
\end{split}
\end{equation*}
Equivalently, we can write
\[
\mathcal{L}^{-1}_{\T}\left[z^{\alpha}F(z)G(z)\right](t)
=\mathcal{L}^{-1}_{\T}\left[z^{\alpha}F(z)\right](t)
\mathcal{L}^{-1}_{\T}\left[G(z)\right](t)=\left(f^{(\alpha)}\ast g(\cdot,t_0)\right)(t)\,.
\]
\end{proof}


\section*{Acknowledgements}

This work is part of the first author's
Ph.D. project carried out at the University of Aveiro.
The financial support of the Polytechnic Institute of Viseu and
\emph{The Portuguese Foundation for Science and Technology} (FCT),
through the ``Programa de apoio \`{a} forma\c{c}\~{a}o avan\c{c}ada
de docentes do Ensino Superior Polit\'{e}cnico'',
Ph.D. fellowship SFRH/PROTEC/49730/2009, is here gratefully acknowledged.
The second author was supported by BUT grant
S/WI/1/08; the third author by the R\&D unit CIDMA via FCT.




\begin{thebibliography}{99}

\bibitem{Anastassiou}
G. A. Anastassiou,
Principles of delta fractional calculus on time scales and inequalities,
Math. Comput. Modelling {\bf 52} (2010), no.~3-4, 556--566.

\bibitem{Aul:Hil}
B. Aulbach\ and\ S. Hilger,
A unified approach to continuous and discrete dynamics,
in {\it Qualitative theory of differential equations (Szeged, 1988)},
37--56, Colloq. Math. Soc. J\'anos Bolyai, 53, North-Holland, Amsterdam, 1990.

\bibitem{comNunoRui:confOrtigueira09}
N. R. O. Bastos, R. A. C. Ferreira\ and\ D. F. M. Torres,
Discrete-time fractional variational problems,
Signal Process. {\bf 91} (2011), no.~3, 513--524.
{\tt arXiv:1005.0252}

\bibitem{BL}
M. Bohner\ and\ D. A. Lutz,
Asymptotic expansions and analytic dynamic equations,
ZAMM Z. Angew. Math. Mech. {\bf 86} (2006), no.~1, 37--45.

\bibitem{Bh}
M. Bohner\ and\ A. Peterson,
{\it Dynamic equations on time scales},
Birkh\"auser Boston, Boston, MA, 2001.

\bibitem{lap_Bohner}
M. Bohner\ and\ A. Peterson,
Laplace transform and $Z$-transform: unification and extension,
Methods Appl. Anal. {\bf 9} (2002), no.~1, 151--157.

\bibitem{Bh:Pet:03}
M. Bohner\ and\ A. Peterson,
{\it Advances in dynamic equations on time scales},
Birkh\"auser Boston, Boston, MA, 2003.

\bibitem{laplace}
J. M. Davis, I. A. Gravagne, B. J. Jackson, R. J. M. Marks\ and \ A. A. Ramos,
The Laplace transform on time scales revisited,
J. Math. Anal. Appl. {\bf 332} (2007), no.~2, 1291--1307.

\bibitem{Hilger}
S. Hilger, Analysis on measure chains---a unified approach
to continuous and discrete calculus,
Results Math. {\bf 18} (1990), no.~1-2, 18--56.

\bibitem{kilbas}
A. A. Kilbas, H. M. Srivastava\ and\ J. J. Trujillo,
{\it Theory and applications of fractional differential equations},
Elsevier, Amsterdam, 2006.

\bibitem{book:Lak}
V. Lakshmikantham, S. Sivasundaram\ and\ B. Kaymakcalan,
{\it Dynamic systems on measure chains}, Kluwer Acad. Publ., Dordrecht, 1996.

\bibitem{ChenPodlubny}
Y. Li, Y. Q. Chen\ and\ I. Podlubny,
Mittag-Leffler stability of fractional order nonlinear
dynamic systems, Automatica {\bf 45} (2009), no.~8, 1965--1969.

\bibitem{oldman}
K. B. Oldham\ and\ J. Spanier,
{\it The fractional calculus}, Academic Press
[A subsidiary of Harcourt Brace Jovanovich, Publishers], New York, 1974.

\bibitem{podlubny}
I. Podlubny, {\it Fractional differential equations},
Academic Press, San Diego, CA, 1999.

\bibitem{emptyPaper}
M. R. S. Rahmat\ and\ M. S. Md. Noorani,
Fractional integrals and derivatives on time scales with an application,
Comput. Math. Appl. (2009), DOI: 10.1016/j.camwa.2009.10.013

\bibitem{samko}
S. G. Samko, A. A. Kilbas\ and\ O. I. Marichev,
{\it Fractional integrals and derivatives},
Translated from the 1987 Russian original, Gordon and Breach, Yverdon, 1993.

\end{thebibliography}
\end{document}